\documentclass{amsart}

\usepackage{amsfonts}
\usepackage{txfonts}
\usepackage{amssymb}
\usepackage{graphicx}
\graphicspath{{figures/}}
\usepackage{epsfig,mathrsfs}
\usepackage[bf,SL,BF]{subfigure}
\usepackage[hypertexnames=false]{hyperref}
\usepackage{longtable}
\usepackage{booktabs}
\usepackage{cite}
\usepackage{diagbox}
\usepackage{caption}
\usepackage{float}
\usepackage{epstopdf}
\usepackage{latexsym, bm}
\usepackage{bbm}
\usepackage{algorithm, algorithmic}
\usepackage{cleveref}
\newtheorem{theorem}{Theorem}[section]

\newtheorem{lemma}{Theorem}[section]
\newtheorem{lem}{Lemma}[section]
\newtheorem{ass}[theorem]{Assumption}

\theoremstyle{definition}

\newtheorem{rem}{Remark}

\numberwithin{equation}{section}
\numberwithin{figure}{section}
\numberwithin{table}{section}

\usepackage{latexsym, bm}
\begin{document}

\title[Reconstructing the potential in the fractional Calder\'on problem]{An efficient method for reconstructing the potential in the fractional Calder\'on problem with a single measurement}

\author[Yun-Long Liu, Jin Wen, Jun-Liang Fu]{Yun-Long Liu$^{1}$, Jin Wen$^{1, 2}$, Jun-Liang Fu$^{1}$\\ \small \it 1. Department of Mathematics, Northwest Normal University, Lanzhou 730070, P. R. China\\
2. Gansu Prov Res Ctr Basic Disciplines Math \& Stat, Lanzhou 730070, P. R.China }
\date{\today}
\thanks{{*}Corresponding author. Email: wenj@nwnu.edu.cn. The work described in this article was supported by the NNSF of China (12261082, 11326234, 12501593), NSF of Gansu Province (145RJZA099), Scientific research project of Higher School in Gansu Province (2014A-012), Project of NWNU-LKQN2020-08, Graduate First Class Course Construction Project of Northwest Normal University in 2024 (2024KCLX01012) and Graduate Education and Curriculum Reform Project of Northwest Normal University in 2024 (2024KGLX01016).}

\begin{abstract}
In this paper, we develop a numerical method for determining the potential function in one-dimensional and two-dimensional fractional-order Calder\'{o}n problems through a single measurement. The fractional Laplacian is discretized using the finite difference method, and the POD-RBF method is used to construct the forward problem surrogate model that can provide the predicted value and its exact gradient in time. The inverse problem is solved using Tikhonov regularization and the conjugate gradient method. The efficiency and effectiveness of the developed method are demonstrated through numerical experiments, and the theoretical results are verified.
\end{abstract}

\keywords{Fractional Calder\'{o}n problem; proper orthogonal decomposition; radial basis function network; surrogate model; single measurement.}

%

\maketitle

\section{\textbf{Introduction}}\label{sec1}
In 1980, Calder\'on proposed the classical Calder\'on inverse problem \cite{Cal-1980}, which has extremely important applications in multiphase flow imaging, medical imaging, and non-destructive detection. In the past half century, the problem has been explored and studied by many scholars, and the generalized nonlocal Calder\'on problem (also known as fractional Calder\'on problem) has also been developed in recent years. The physical background of the nonlocal Calder\'on problem stems from the advantages of fractional calculus in describing complex physical phenomena, particularly its ability to characterize non-locality, memory effects, and anomalous diffusion behaviors. Its application background is extensive, covering quantum mechanics, medical imaging, geophysical exploration, and environmental science, etc. \cite{LPG-2020,QU-2024,KLW-2022}.

The study of non-local Calder\'on problems began with the research by T. Ghosh et al. on the Calder\'on problem for fractional Schr\"odinger equations \cite{GSU-2016}, they proved the global uniqueness of an inverse problem for this issue in two cases: an unknown potential in a bounded domain is uniquely determined by exterior measurements of solutions; the global uniqueness in the partial data problem where measurements are taken in arbitrary open, possibly disjoint, subsets of the exterior. Over the past decade, research groups led by G. Uhlmann have systematically investigated a broad class of fractional Calder\'on problems, yielding a series of foundational and influential results. For instance, the fractional inverse Laplace problem with low-order nonlocal perturbations \cite{Ghosh2020}, the uniqueness and reconstruction of the fractional Calder\'on problem with a single measurement \cite{BGU-2021}, the uniqueness of the high-order Calder\'on problem involving linear local operators \cite{CMRU-2022}, and the Calder\'on problem for the fractional Dirac operator \cite{QU-2024}. In a complementary line of work, Covi et al. \cite{CR-2021} addressed the Calder\'on problem for both local and nonlocal Schr\"odinger equations. Leveraging Runge-type approximation properties, they established a simultaneous approximation theorem for the interior potential and boundary potential, thereby proving uniqueness for localized potentials. Furthermore, by exploiting the duality structure of a novel Carleman estimate, they constructed a family of complex geometric optics solutions adapted to the underlying equation-enabling uniqueness proofs for arbitrary bounded potentials. Most recently, Fei \cite{Fei-2024} analyzed the nonlocal Calder\'on problem on closed Riemannian manifolds, establishing uniqueness together with logarithmic stability estimates.  Additional progress includes the treatment of the space-time fractional parabolic operator with variable coefficients \cite{BS-2024}, further extending the scope of admissible operators and geometries in the fractional inverse problem paradigm.

While most research has focused on uniqueness and stability, numerical methods for the fractional Calder\'on problem remain scarce. The present work fills this gap by developing a POD-RBF surrogate-based inversion that accelerates the reconstruction from a single measurement. Proper orthogonal decomposition (POD) \cite{Sirovich1987} has been successfully combined with radial basis functions (RBF) for inverse heat conduction problems \cite{OBK-2005,OBK-20052,OBK-20053}, but its application to fractional Calder\'on problems is new.

Xiao and Fang~\cite{Xiao2014} improved the accuracy of nonlinear fluid simulations by integrating POD with the discrete empirical interpolation method (DEIM) and employing Petrov每Galerkin stabilization, which suppressed spurious oscillations by 90\%. Rooted in singular value decomposition, POD systematically extracts the dominant energetic modes of a dynamical system, thereby enabling an accurate representation of the solution behavior at substantially reduced computational complexity. The resulting reduced-order models have proven effective across a broad range of partial differential equations, including the Navier每Stokes equations~\cite{Kunisch2001}, convection每diffusion systems~\cite{Wu2022}, linear parabolic equations~\cite{Volkwein2013}, and nonlinear formulations such as viscous G-equations~\cite{Gu2021} and Hamilton每Jacobi每Bellman equations~\cite{Kunisch2004}. Additional references on the POD method are given in~\cite{Alla2013,Wang2020,Hesthaven2016,Quarteroni2016,Gorelick1983,Ravindran2000,Garcia2013,Chen2022,Huang2025}.

Let $\Omega \subset \mathbb{R}^n$ ($n = 1, 2$) be a bounded open set and $s \in (0, 1)$. The fractional Schr\"odinger equation under consideration is
\begin{equation}
\label{eq11}
\begin{cases}
((-\Delta)^s + q(x)) u(x) = 0, & x \in \Omega, \\[4pt]
u(x) = f(x), & x \in \Omega_e = \mathbb{R}^n \setminus \overline{\Omega},
\end{cases}
\end{equation}
where $q \in L^\infty(\Omega)$ is the potential function to be reconstructed, and $f \in C_c^\infty(\Omega_e)$ is a prescribed external source.The fractional Laplacian $(-\Delta)^s$ is defined via the hypersingular integral
\begin{equation}
\label{eq12}
(-\Delta)^s u(x) = c_{n,s} \, \mathrm{P.V.} \int_{\mathbb{R}^n} \frac{u(x) - u(\mathbf{y})}{|x - \mathbf{y}|^{n+2s}} \, d\mathbf{y},
\end{equation}
where
\begin{equation}\label{eq13}
c_{n,s} = \frac{2^{2s} s \, \Gamma(\frac{n}{2} + s)}{\pi^{n/2} \Gamma(1 - s)}.
\end{equation}
Furthermore, if \(u\) belongs to the Schwartz space of rapidly decaying functions, by \cite{Kwasnicki-2017}, the fractional Laplacian can be equivalently defined through the Fourier transform
\[
(-\Delta)^{s}u(x) = \mathcal{F}^{-1}(|\xi|^{2s}\mathcal{F}(u)).
\]
We will reconstruct \(q(x)\) in (1.1) with a single measurement \((-\Delta)^{s}u|_{W}, W\subset \Omega_{e}\) numerically according to the theoretical result in \cite{Ghosh2020}.

The following condition will be assumed in this paper:
\begin{ass}\cite{GSU-2016}
If \(u\in H^{s}(\mathbb{R}^{n})\) solves
\[
\left\{
\begin{array}{ll}
((-\Delta)^{s} + q(x))u(x) = 0, & x\in \Omega,\\
u(x) = 0, & x\in \Omega_{e},
\end{array}
\right.
\]
then \(u\equiv 0\). This is equivalent to the assumption that zero is not a Dirichlet eigenvalue of \((-\Delta)^{s} + q\).
\end{ass}
The inverse problem consists of reconstructing the potential $q(x)$ from a single noisy measurement of the fractional Laplacian applied to the solution, restricted to an exterior observation set $W \subset \Omega_e$.

To enable a finite-dimensional parameterization, the potential is expanded in a prescribed basis $\{\varphi_k\}_{k=1}^P$ with compact support in $\Omega$:
\begin{equation}
\label{eq14}
q(x) = \sum_{k=1}^P p_k \varphi_k(x), \quad \mathbf{p} = (p_1, \dots, p_P)^T \in \mathbb{R}^P.
\end{equation}
The vector $\mathbf{p}$ constitutes the unknown parameters to be retrieved.

In this paper, the POD-RBF network method demonstrates a remarkable advantage over conventional numerical approaches such as the finite difference method in accelerating the inversion process for the fractional Calder\'on problem with a single measurement. Leveraging this strength, our work applies POD combined with radial basis function (RBF) interpolation to reconstruct the unknown potential from noisy exterior measurements of the fractional Laplacian. The methodology hinges on two pivotal principles.

\begin{itemize}
    \item \textbf{Offline training phase}: Generate a set of representative potential parameter vectors and their corresponding observation snapshots by solving the forward fractional Schr\"odinger equation. Extract POD basis vectors from the centered snapshot matrix to achieve dimensionality reduction, and train an RBF network to map the parameter space to the POD amplitude coefficients.
    \item \textbf{Online inversion phase}: Replace the expensive full-order forward solves with the POD-RBF surrogate model during the iterative optimization. The surrogate provides instantaneous predictions of observations and exact gradients, enabling efficient conjugate gradient minimization with Tikhonov regularization.
\end{itemize}

By integrating data-driven model reduction with a smooth nonlinear surrogate, this framework achieves both accuracy and computational efficiency in solving the severely ill-posed fractional Calder\'on inverse problem.

This paper is organized as follows. Section 2 introduces the forward problem formulation for the fractional Schr\"odinger equation in both one- and two-dimensional settings, along with finite difference discretizations. In Section 3, we develop the POD-RBF network surrogate model, including snapshot generation, basis extraction, and the construction of the nonlinear amplitude map. Section 4 presents the inversion algorithm, which couples the surrogate model with Tikhonov regularization and conjugate gradient optimization, and provides a rigorous convergence analysis. Section 5 outlines numerical experiments in 1D and 2D, demonstrating the method's robustness under various noise levels and potential regularities. Finally, Section 6 gives a short conclusion.

\section{Overview and algorithms of the forward problem}\label{sec2}

In this section, we analyse the well-posedness of the observation and provide numerical schemes for forward problem.

\subsection{Overview of the well-posedness}

In the following, we will use several \(L^2\) based Sobolev spaces defined as~\cite{M-2000}
\[
H^{\mu}(\mathbb{R}^{n}):=\left\{u\in L^{2}(\mathbb{R}^{n});\int_{\mathbb{R}^{n}}(1+|\xi|^{2})^{\mu}|\hat{u}(\xi)|^{2}d\xi< \infty\right\},
\]
\[
\hat{H}^{\mu}(U):=\text{closure of }C_{c}^{\infty}(U)\text{ in }H^{\mu}(\mathbb{R}^{n}),
\]
\[
H^{\mu}(U):=\{u|_{U}:u\in H^{\mu}(\mathbb{R}^{n})\},
\]
where \(U\subset \mathbb{R}^n\) is an open domain.

First, we recall a lemma about the existence, uniqueness and well-posedness of the weak solution of equation \eqref{eq11}.
\begin{lem}\cite{GSU-2016}
Let \(n\geq 1\), \(s\in (0,1)\), \(\Omega\) be a bounded open set and \(q\in L^{\infty}(\Omega)\). Suppose that Assumption 1.1 holds. Let
\[
B_{q}(v,w) = ((-\Delta)^{\frac{s}{2}}v,(-\Delta)^{\frac{s}{2}}w)_{\mathbb{R}^{n}} + (qv|_{\Omega},w|_{\Omega})_{\Omega},\quad v,w\in H^{s}(\mathbb{R}^{n}).
\]
Then for any \(f\in H^{s}(\mathbb{R}^{n})\) and \(g\in (\hat{H}^{s}(\Omega))^{*}\), there is a unique solution \(u\in H^{s}(\mathbb{R}^{n})\) in (1.1) satisfying
\[
B_{q}(u,w) = (g,w),\quad \text{for } w\in \hat{H}^{s}(\Omega),\qquad u - f\in \hat{H}^{s}(\Omega).
\]
Moreover, the norm estimate
\[
\| u\|_{H^{s}(\mathbb{R}^{n})}\leq C(\| g\|_{(\hat{H}^{s}(\Omega))^{*}} + \| f\|_{H^{s}(\mathbb{R}^{n})})
\]
holds with \(C\) independent of \(g\) and \(f\).
\end{lem}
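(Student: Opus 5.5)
We reduce to a problem with homogeneous exterior data and then apply Lax--Milgram together with the Fredholm alternative; Assumption 1.1 supplies the injectivity that Fredholm theory requires.

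\textbf{Reduction.} Write $u = f + v$ with $v \in \hat{H}^s(\Omega)$. The problem becomes: find $v\in\hat H^s(\Omega)$ with
\[
B_q(v,w) = (g,w) - B_q(f,w) =: F(w), \qquad w\in \hat H^s(\Omega).
\]
The functional $F$ lies in $(\hat H^s(\Omega))^*$. Indeed, $B_q$ is bounded on $H^s(\mathbb{R}^n)\times H^s(\mathbb{R}^n)$: by Plancherel the first term is at most $\|v\|_{H^s}\|w\|_{H^s}$, and the potential term is at most $\|q\|_{L^\infty}\|v\|_{L^2}\|w\|_{L^2}$. Hence
\[
\|F\|_* \le \|g\|_* + C\|f\|_{H^s}.
\]

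\textbf{Coercivity after a shift.} On $\hat H^s(\Omega)$ with $\Omega$ bounded, a fractional Poincar\'e inequality holds:
\[
\|w\|_{L^2}\le C\|(-\Delta)^{s/2}w\|_{L^2}.
\]
This follows from $\hat w$ being controlled near $\xi=0$ by compact support, or from the fractional Sobolev embedding. Consequently $\|(-\Delta)^{s/2}w\|_{L^2}^2$ is equivalent to $\|w\|_{H^s}^2$ on $\hat H^s(\Omega)$. Setting $\lambda = \|q\|_{L^\infty}+1$, the form $B_q(v,w) + \lambda (v,w)_\Omega$ is coercive on $\hat H^s(\Omega)$. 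Lax--Milgram then gives a bounded solution operator
\[
G_\lambda : (\hat H^s(\Omega))^* \to \hat H^s(\Omega).
\]

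\textbf{Fredholm step.} The equation $B_q(v,\cdot)=F$ is equivalent to
\[
v - \lambda G_\lambda(\iota v) = G_\lambda F,
\]
where $\iota$ is the embedding $\hat H^s(\Omega)\to L^2(\Omega)\to(\hat H^s(\Omega))^*$. This embedding is compact by Rellich's theorem for $\hat H^s(\Omega)\hookrightarrow L^2(\Omega)$ on bounded $\Omega$. Therefore $I-\lambda G_\lambda\iota$ is a compact perturbation of the identity.

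By the Fredholm alternative, unique solvability for every right-hand side follows from injectivity. A kernel element $v$ satisfies $B_q(v,w)=0$ for all $w$ and $v\in\hat H^s(\Omega)$; that is, it is a weak solution of the homogeneous problem with zero exterior data. Assumption 1.1 forces $v=0$. The inverse is then bounded, so
\[
\|v\|_{H^s}\le C\|F\|_*,
\]
and therefore
\[
\|u\|_{H^s}\le \|f\|_{H^s}+\|v\|_{H^s} \le C(\|g\|_*+\|f\|_{H^s}).
\]

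\textbf{Main obstacle.} The delicate point is checking the functional-analytic ingredients on $\hat H^s(\Omega)$ for an arbitrary bounded open set with no boundary regularity. These are the Poincar\'e inequality and compact embedding, and both hold here because elements are supported in $\overline\Omega$. Also needed is identifying the weak formulation with Assumption 1.1, read as stated for $H^s$ weak solutions.
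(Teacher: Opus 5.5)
Your proposal is correct and is essentially the standard proof from the cited source \cite{GSU-2016}; the paper itself only quotes the lemma. That proof reduces to $v=u-f\in\hat H^s(\Omega)$, makes the shifted form $B_q+\lambda(\cdot,\cdot)_\Omega$ coercive, applies Lax--Milgram, and uses the compact embedding $\hat H^s(\Omega)\hookrightarrow L^2(\Omega)$ with the Fredholm alternative, Assumption 1.1 ruling out a nontrivial kernel. As a minor point, the Poincar\'e inequality is not needed for the shifted coercivity since $(1+|\xi|^2)^s\le 1+|\xi|^{2s}$; if you keep it, use your Fourier low-frequency argument, because the fractional Sobolev embedding does not apply directly when $n=1$, $s\ge\frac12$.
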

Next, we provide the regularity estimate of the observation, and further illustrate that using the \(L^2\) norm to estimate the residual of the Tikhonov regularization functional is reasonable under certain conditions.
\begin{lem}\cite{Li2025}
Assume \(u\) is the solution of \eqref{eq11}, \(f\in C_c^{\infty}(\Omega_e)\), \(q\in L^{\infty}(\Omega)\), \(W_1, W_2 \subset \Omega_e\), \(\operatorname{supp}(f) \subset W_1\), \(\overline{W_1} \cap \overline{\Omega} = \emptyset\), and \(\overline{W_2} \cap \overline{\Omega} = \emptyset\). Then \((-\Delta)^s u|_{W_2} \in L^2(W_2)\).
\end{lem}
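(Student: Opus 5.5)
The plan is to split $u$ into the exterior datum and an interior correction, and then to use the fact that, for a point $x$ away from the support of a function, the fractional Laplacian is given by an integral with a smooth kernel. Write $u = f + v$ with $v := u - f \in \hat H^s(\Omega)$. Lemma 2.1 (with $g=0$) gives existence, and $\|u\|_{H^s(\mathbb{R}^n)} \le C\|f\|_{H^s}$; hence $\|v\|_{L^2(\Omega)} \le C\|f\|_{H^s}$. Since $\operatorname{supp} v\subset\overline\Omega$, we have $v=0$ a.e. outside $\overline\Omega$. By linearity,
\[
(-\Delta)^s u|_{W_2} = (-\Delta)^s f|_{W_2} + (-\Delta)^s v|_{W_2},
\]
so it suffices to show that each term is in $L^2(W_2)$.

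\textbf{Term from $f$.} Since $f\in C_c^\infty(\mathbb{R}^n)$, it is a Schwartz function. By the Fourier definition, $(-\Delta)^s f = \mathcal{F}^{-1}(|\xi|^{2s}\hat f)$ with $|\xi|^{2s}\hat f\in L^2$. Plancherel then gives $(-\Delta)^s f\in L^2(\mathbb{R}^n)$, and in particular its restriction to $W_2$ lies in $L^2(W_2)$.

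\textbf{Term from $v$.} This is the main step. For $x\in W_2$ we have $v(x)=0$, and $d:=\operatorname{dist}(W_2,\Omega)>0$ because $\overline{W_2}\cap\overline\Omega=\emptyset$. I would tacitly assume $W_2$ bounded; otherwise the same estimate still works, using the decay $|x-y|^{-n-2s}$ at infinity. The principal value in \eqref{eq12} is then not singular, and
\[
(-\Delta)^s v(x) = -c_{n,s}\int_{\Omega}\frac{v(y)}{|x-y|^{n+2s}}\,dy, \qquad x\in W_2 .
\]
To justify this formula for an $H^s$ function rather than a Schwartz one, I would argue by density. Take $v_k\in C_c^\infty(\Omega)$ with $v_k\to v$ in $H^s$. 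For the $v_k$ the identity holds pointwise. Both sides converge: the left side in the distributional sense, since $(-\Delta)^s$ is continuous $H^s\to H^{-s}$; the right side uniformly on $W_2$, since the kernel is bounded by $d^{-n-2s}$ and $\Omega$ is bounded. This identification is where most of the care is needed.

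Given the formula, Cauchy--Schwarz yields, for $x\in W_2$,
\[
|(-\Delta)^s v(x)| \le c_{n,s}\, |\Omega|^{1/2}\, \|v\|_{L^2(\Omega)} \,\Bigl(\sup_{y\in\Omega}|x-y|^{-n-2s}\Bigr).
\]
For bounded $W_2$ this bound is finite, so $(-\Delta)^s v$ is bounded on $W_2$ and therefore in $L^2(W_2)$. For unbounded $W_2$, use instead
\[
\int_{W_2}\operatorname{dist}(x,\Omega)^{-2n-4s}\,dx<\infty,
\]
which holds because $2n+4s>n$.

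Combining the two terms gives $(-\Delta)^s u|_{W_2}\in L^2(W_2)$, together with the bound $\|(-\Delta)^s u\|_{L^2(W_2)}\le C\|f\|_{H^s}$. This bound is what justifies using an $L^2$ residual in the Tikhonov functional. The hypothesis on $W_1$ only serves to guarantee $f\in C_c^\infty(\Omega_e)$; it is needed again only if one wants extra smoothness of the observation.
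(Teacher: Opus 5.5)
The paper does not prove this lemma itself. It is simply quoted from \cite{Li2025}, so there is no argument in the paper to compare against. Your proof is correct and self-contained. The steps that carry it are:
\begin{itemize}
\item the splitting $u=f+v$ with $v\in\hat H^s(\Omega)$;
\item Plancherel for $(-\Delta)^s f$;
\item the off-support formula $(-\Delta)^s v(x)=-c_{n,s}\int_\Omega v(y)|x-y|^{-n-2s}\,dy$ on $W_2$, justified by approximating $v$ in $H^s(\mathbb{R}^n)$ by $C_c^\infty(\Omega)$ functions and matching the $H^{-s}$ limit with the uniform limit of the kernel integrals.
\end{itemize}
The positive distance $d$ holds even for unbounded $W_2$, because $\overline\Omega$ is compact, and your decay estimate handles that case. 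As you observe, the hypothesis on $W_1$ is redundant: a compact subset of the open set $\Omega_e$ is automatically at positive distance from $\overline\Omega$. Also, $q$ enters only through the existence of $u\in H^s(\mathbb{R}^n)$.

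One correction concerns your closing side remark, not the lemma. Your argument does not give $\|(-\Delta)^s u\|_{L^2(W_2)}\le C\|f\|_{H^s}$ with $C$ independent of $f$.
\begin{itemize}
\item The $v$-part is indeed bounded by $C\|f\|_{H^s}$.
\item Plancherel bounds the $f$-part only by $\||\xi|^{2s}\hat f\|_{L^2}\le\|f\|_{H^{2s}}$.
\item This cannot be improved to $\|f\|_{H^s}$ when $W_2$ overlaps the support of $f$, which is the setting $W_1=W_2$ used in the paper's experiments. For example, take $f=\chi\cos(\lambda x_1)$ with $\chi\in C_c^\infty(W_1)$. Then $\|(-\Delta)^s f\|_{L^2(W_2)}\gtrsim\lambda^{2s}$ while $\|f\|_{H^s}\sim\lambda^s$.
\end{itemize}
The correct estimate is $\|(-\Delta)^s u\|_{L^2(W_2)}\le\|f\|_{H^{2s}}+C\|f\|_{H^s}$. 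A bound in terms of $\|f\|_{H^s}$ alone, or even $\|f\|_{L^2}$, holds only when $\overline{W_2}$ is at positive distance from $\operatorname{supp} f$; then your kernel argument applies to the $f$-term as well. This does not affect the qualitative statement you were asked to prove.
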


\subsection{\textbf{Algorithms of the forward problem}}

For $n=1$, let $\Omega = (-L, L)$, we introduce a truncation parameter $R > L$ and consider the following truncated fractional Schr\"odinger equation.
\begin{equation}
\label{eq21}
\begin{cases}
((-\Delta)^s + q(x)) u_R(x) = 0, & x \in (-L, L), \\[4pt]
u_R(x) = f(x), & x \in (-R, R)\setminus (-L, L),\\
u_R(x) = 0, &x \in \mathbb{R} \setminus (-R, R),
\end{cases}
\end{equation}
where $f(x)$ is positive and decay in the direction $|x|\rightarrow\infty$. Now we use the finite difference approximations defined in [\citenum{Li2023}, Section 3] for \eqref{eq21}. Let $N$ be a positive integer and $h = 2L/N$ the spatial step size, and define the grid points $x_i = -L + i h$, $i \in \mathbb{Z}$. In addition, we also consider the mesh $\xi j = jh$ for $0 \leq j \leq N$.
Setting $\xi = |x-x'|$, $M = \lfloor\frac{R-L}{h}\rfloor+1$, we discretize the integral fractional Laplacian operator at the points $x_i, i = 1, \dots, N-1$ as
\begin{align}\label{eq22}
&(-\Delta)^s u_R(x_i) \nonumber\\
&= -c_{1,s} \int_0^\infty \frac{u_R(x_i-\xi) - 2u_R(x_i) + u_R(x_i+\xi)}{\xi^{1+2s}} \, d\xi \\
&= -c_{1,s} \Bigg( \sum_{j=1}^N \int_{\xi_{j-1}}^{\xi_j} \psi_{\gamma,R}(x_i,\xi)\omega_\gamma(\xi) \, d\xi - 2u_R(x_i) \int_{2L}^\infty \frac{1}{\xi^{1+2s}} \, d\xi + F_{R,i} + \varepsilon_{R,i} \Bigg),
\end{align}

where $f$ is positive and decays at infinity. Using the finite difference quadrature with grid size $h=2L/N$ and points $x_i=-L+ih$, we discretize the fractional Laplacian as
\[
(-\Delta)^s u_R(x_i) \approx -c_{1,s}\Bigl( \sum_{j=1}^N \int_{\xi_{j-1}}^{\xi_j} \psi_{\gamma,R}(x_i,\xi)\omega_\gamma(\xi)\,d\xi - 2u_R(x_i)\int_{2L}^\infty \frac{d\xi}{\xi^{1+2s}} + F_{R,i} \Bigr),
\]
with $\psi_\gamma$ and $\omega_\gamma$ defined as in \cite{Li2023}. After applying a weighted trapezoidal rule with splitting parameter $\gamma$, we obtain the linear system
\begin{equation}
(\mathbf{L}^s_{1D} + \mathbf{Q})\mathbf{U}_R = \mathbf{b}_R,
\label{eq26}
\end{equation}
where $\mathbf{L}^s_{1D}$ is a symmetric positive definite Toeplitz matrix, $\mathbf{Q}=\operatorname{diag}(q_i)$, and $\mathbf{b}_R$ collects contributions from the exterior data $f$, so the linear algebraic system \eqref{eq26} can be efficiently solved by many Krylov subspace methods using the fast Fourier transform.

\subsection{Two-dimensional finite difference discretization}\label{subsec22}
For $n=2$, let $\Omega = (-L, L)^2$, introduce a truncation parameter $R > L$, we consider the problem on $[-R, R]^2$. The two-dimensional truncated equation can be written as
\begin{equation}\label{eq28}
\begin{cases}
(-\Delta)^s u_R(x,y) + q(x,y) u_R(x,y) = g(x,y), & (x,y) \in (-L, L)^2, \\
u_R(x,y) = f(x,y), & (x,y) \in (-R, R)^2 \setminus (-L, L)^2, \\
u_R(x,y) = 0, & (x,y) \in \mathbb{R}^2 \setminus (-R, R)^2,
\end{cases}
\end{equation}
where $s \in (0,1)$, $f(x,y)$ is positive and decay in the direction $(x^2 + y^2)^{\frac12} \to +\infty$.

Using a Cartesian grid $x_i=-L+ih$, $y_j=-L+jh$ with $h=2L/N$, the integral fractional Laplacian at interior point $(x_i,y_j)$ is approximated as
\[
(-\Delta)^s u_R(x_i,y_j) = -c_{2,s}I_{i,j} + \mathcal{O}(h^\kappa) - c_{2,s}(F_{R,i,j}+\epsilon_{R,i,j}),
\]
where $I_{i,j}$ discretizes the integral over $(0,2L)^2$ and $F_{R,i,j}$ handles the known exterior data on $D_2$. The finite difference weights $a_{mn}$ are computed by the quadrature rules described in
[\citenum{Li2025}, Section 2]. The resulting discrete equations can be written as
\begin{equation}
\label{eq215}
(\mathbf{L}_{2D}^s + \mathbf{Q}) \mathbf{U}_R = \mathbf{b}_{R}.
\end{equation}
with $\mathbf{L}_{2D}^s$ having a Toeplitz-block-Toeplitz structure, $\mathbf{Q}$ diagonal, and $\mathbf{b}_R$ from $f$. The local truncation error satisfies
\[
\|\mathbf{u}_2 - \mathbf{U}_{R,2}\|_\infty \le C(h^\kappa + R^{-2s}\max_{\partial[-R,R]^2} f) + \epsilon_{2D},
\]
as derived in \cite{Li2023, Duo2019}.

\subsection{Discrete observation operator}
\label{subsec:obs_op}

The measurement data correspond to the values of $(-\Delta)^s u$ on the exterior set $W$. Let $\mathbf{U}_{\text{all}}$ denote the solution on all grid points (both internal and external). The full discrete fractional Laplacian matrix $\mathbf{L}_{\text{full}}$ acts on $\mathbf{U}_{\text{all}}$ to produce the discrete $(-\Delta)^s u$ at every node. Since the external values are known and equal to $f$, we can split the computation for an observation point $x_\ell \in W$ as
\begin{equation}\label{eq216}
(-\Delta)_h^s u(x_\ell) = \sum_{j \in \mathcal{I}} w_{\ell,j} U_{R,j} \;+\; \sum_{k \in \mathcal{E}} w_{\ell,k} f_k,
\end{equation}
where $\mathcal{I}$ and $\mathcal{E}$ denote the index sets of internal and external nodes, respectively, and $w_{\ell,m}$ are the discrete template coefficients. In vector form, this becomes
\begin{equation}\label{eq217}
\mathbf{h} = \mathbf{E}_W \big( \mathbf{L}_h^s \mathbf{U}_R + \mathbf{b}_{\text{R}} \big),
\end{equation}
where $\mathbf{L}_h^s$ generically denotes $\mathbf{L}^s_{1D}$ in 1D or $\mathbf{L}_{2D}^s$ in 2D, $\mathbf{E}_W$ is the restriction matrix extracting the rows corresponding to $W$, and $\mathbf{b}_{\text{R}}$ is the constant vector accounting for the contribution of the known exterior field $f$ to the observations.

\section{Proper orthogonal decomposition basis and RBF network}
\label{sec:POD}
\subsection{Proper orthogonal decomposition basis}
The construction of a reduced-order model begins with generating a set of representative forward solutions. The unknown potential is expressed in normalized form $\bar{\mathbf{p}} \in [0,1]^P$, where each component is scaled as $\bar{p}_i = (p_i - p_i^{\min})/(p_i^{\max} - p_i^{\min})$. A total of $M$ training points $\{\bar{\mathbf{p}}^{(m)}\}_{m=1}^M$ are selected to sample the expected parameter range. For every training point, the forward problem \eqref{eq26} (in 1D) or \eqref{eq215} (in 2D) is solved, yielding the internal solution $\mathbf{U}_R^{(m)}$. The corresponding observation vector $\mathbf{h}^{(m)} \in \mathbb{R}^S$ is then computed via the affine model \eqref{eq217}. These vectors are collected columnwise into the snapshot matrix
\begin{equation}
\label{eq31}
\mathbf{H} = \big[ \mathbf{h}^{(1)} \; \mathbf{h}^{(2)} \; \cdots \; \mathbf{h}^{(M)} \big] \in \mathbb{R}^{S \times M}.
\end{equation}
To focus on fluctuations around the mean, we compute the average $\bar{\mathbf{h}} = \frac{1}{M} \sum_{m=1}^M \mathbf{h}^{(m)}$ and subtract it from each snapshot, giving the centered matrix $\tilde{\mathbf{H}} = \mathbf{H} - \bar{\mathbf{h}} \mathbf{1}^T$.

Next, POD is applied to $\tilde{\mathbf{H}}$ using the method of snapshots. We form the $M \times M$ correlation matrix
\begin{equation}\label{eq32}
\mathbf{C} = \tilde{\mathbf{H}}^T \tilde{\mathbf{H}},
\end{equation}
and solve its eigenvalue problem
\begin{equation}
\label{eq33}
\mathbf{C} \mathbf{v}_k = \lambda_k \mathbf{v}_k, \qquad \lambda_1 \ge \lambda_2 \ge \cdots \ge \lambda_M \ge 0.
\end{equation}
The eigenvalues $\lambda_k$ indicate the energy content of each mode. Because the energy typically decays rapidly, we keep only the first $K$ modes. Given a tolerance $\epsilon > 0$, $K$ is chosen as the smallest integer satisfying
\begin{equation}\label{eq34}
\frac{\sum_{k=1}^K \lambda_k}{\sum_{k=1}^M \lambda_k} \ge 1 - \epsilon.
\end{equation}
The orthonormal POD basis vectors in $\mathbb{R}^S$ are then constructed as
\begin{equation}
\label{eq35}
\boldsymbol{\psi}_k = \frac{1}{\sqrt{\lambda_k}} \sum_{m=1}^M (\mathbf{v}_k)_m \tilde{\mathbf{h}}^{(m)}, \quad k = 1, \dots, K.
\end{equation}
Let $\boldsymbol{\Psi} = [\boldsymbol{\psi}_1 \cdots \boldsymbol{\psi}_K] \in \mathbb{R}^{S \times K}$ denote the matrix of retained modes. Any centered observation can be approximated by $\tilde{\mathbf{h}} \approx \boldsymbol{\Psi} \mathbf{a}$, with the amplitude vector obtained via projection $\mathbf{a} = \boldsymbol{\Psi}^T \tilde{\mathbf{h}} \in \mathbb{R}^K$.

For each training snapshot, we compute its amplitude vector $\mathbf{a}^{(m)} = \boldsymbol{\Psi}^T \tilde{\mathbf{h}}^{(m)}$ and arrange them into the amplitude matrix
\begin{equation}
\label{eq36}
\mathbf{A} = [\mathbf{a}^{(1)} \; \mathbf{a}^{(2)} \; \cdots \; \mathbf{a}^{(M)}] \in \mathbb{R}^{K \times M}.
\end{equation}
This matrix stores the low-dimensional representation of all training observations in the POD space.
\subsection{RBF network and surrogate model}
\label{sec:RBF}

In \cite{BK-2003,OBK-20052}, the amplitude vectors $\mathbf{a}$  were assumed constant. Following \cite{OBK-2005,OBK-20053,O-2005}, we approximate the POD amplitude vector $\mathbf{a}(\bar{\mathbf{p}})$ by a linear combination of radial basis functions centered at the training points:
\begin{equation}\label{eq41}
\mathbf{a}(\bar{\mathbf{p}}) = \mathbf{B}\,\mathbf{f}(\bar{\mathbf{p}}),\qquad
\mathbf{f}(\bar{\mathbf{p}}) = \bigl[f_1(\bar{\mathbf{p}}),\dots,f_M(\bar{\mathbf{p}})\bigr]^T,
\end{equation}
with the inverse multiquadric kernel
\begin{equation}\label{eq43}
f_m(\bar{\mathbf{p}}) = \frac{1}{\sqrt{\|\bar{\mathbf{p}} - \bar{\mathbf{p}}^{(m)}\|_2^2 + \sigma^2}},
\end{equation}
where $\sigma$ is the average pairwise distance among the training parameters
\begin{equation}\label{eq44}
\sigma = \frac{1}{M(M-1)} \sum_{m=1}^{M} \sum_{\substack{n=1 \\ n \neq m}}^{M} \|\bar{\mathbf{p}}^{(m)} - \bar{\mathbf{p}}^{(n)}\|_2.
\end{equation}
The coefficient matrix $\mathbf{B}\in\mathbb{R}^{K\times M}$ is determined by enforcing exact reproduction at the training points:
\begin{equation}\label{eq45}
\mathbf{A} = \mathbf{B} \mathbf{F},
\end{equation}
where $\mathbf{F} \in \mathbb{R}^{M \times M}$ is the matrix of interpolation functions defined as
\begin{equation*}
\mathbf{F} = \begin{bmatrix}
f_1(\|\bar{\mathbf{p}}^{(1)} - \bar{\mathbf{p}}^{(1)}\|) & \cdots & f_1(\|\bar{\mathbf{p}}^{(m)} - \bar{\mathbf{p}}^{(1)}\|) & \cdots & f_1(\|\bar{\mathbf{p}}^{(M)} - \bar{\mathbf{p}}^{(1)}\|) \\
\vdots & \vdots & \vdots & \vdots & \vdots \\
f_i(\|\bar{\mathbf{p}}^{(1)} - \bar{\mathbf{p}}^{(i)}\|) & \cdots & f_i(\|\bar{\mathbf{p}}^{(m)} - \bar{\mathbf{p}}^{(i)}\|) & \cdots & f_i(\|\bar{\mathbf{p}}^{(M)} - \bar{\mathbf{p}}^{(i)}\|) \\
\vdots & \vdots & \vdots & \vdots & \vdots \\
f_M(\|\bar{\mathbf{p}}^{(1)} - \bar{\mathbf{p}}^{(M)}\|) & \cdots & f_M(\|\bar{\mathbf{p}}^{(m)} - \bar{\mathbf{p}}^{(M)}\|) & \cdots & f_M(\|\bar{\mathbf{p}}^{(M)} - \bar{\mathbf{p}}^{(M)}\|)
\end{bmatrix},
\end{equation*}
with $\bar{\mathbf{p}}^{(i)}, \bar{\mathbf{p}}^{(m)}$ denoting the normalized parameter vectors used to generate the $i$-th and $m$-th snapshots, respectively.

Since $\mathbf{A}=\boldsymbol{\Psi}^T\tilde{H}$ is known, we solve the linear system $\mathbf{F}^T\mathbf{B}^T = \mathbf{A}^T$ (a single factorization of $\mathbf{F}$). The surrogate model then yields the predicted observation for any $\bar{\mathbf{p}}$ as
\begin{equation}\label{eq48}
\mathbf{h}_{\text{pred}}(\bar{\mathbf{p}}) = \bar{\mathbf{h}} + \boldsymbol{\Psi} \mathbf{B} \mathbf{f}(\bar{\mathbf{p}}).
\end{equation}

\section{The inversion algorithm}\label{sec5}


\subsection{Convergence analysis}

In this subsection we give a convergence estimate for the proposed POD-RBF surrogate-based inversion method. Unlike direct Tikhonov regularization on the potential function \(q\), our approach first parameterizes \(q\) by a finite-dimensional vector \(\bar{\mathbf{p}}\in[0,1]^P\), then approximates the observation map by a POD-RBF surrogate model, and finally recovers the potential by regularized inversion of the parameter vector. Therefore the convergence analysis must account for the parameterization error, the surrogate model error, and the measurement noise.

Firstly, we assume that

(i) Domain \(\Omega \subset \mathbb{R}\) is a bounded open set satisfying strong local Lipschitz condition, \(W_{1},W_{2}\subset \Omega_{e}\) are open sets, and \(\overline{\Omega}\cap \overline{W_{1}}=\overline{\Omega}\cap \overline{W_{2}} = \emptyset\);

(ii) Functions \(q,u,f\) satisfy the equation (1.1), \(q = 0\) near \(\partial \Omega_{e}\) and \(f\in C_{c}^{\infty}(\Omega_{e})\backslash \{0\}\), \(\operatorname{supp}(f)\subset W_{1}\);

(iii) The observation \(h^{\delta}\) satisfies
\[
\| h^{\delta}(x) - (-\Delta)^{s}u(x)\|_{L^{2}(W_{2})}\leq \delta .
\]

Let \(\bar{\mathbf{p}}\in[0,1]^P\) be the normalized parameter vector, and let \(\mathcal{P}:[0,1]^P\to L^\infty(\Omega)\) be a parameterization map such that \(q\approx \mathcal{P}(\bar{\mathbf{p}})\). In order to analyze the errors brought about by the parametric potential term and the surrogate model, we introduce two operators:\\
Exact forward operator $F: q\mapsto(-\Delta)^s u|_{W_2}$, the exact observation is \(\mathbf{h}_{\mathrm{true}}=F(q_{\mathrm{true}})\);\\
POD-RBF surrogate model  operator $G: \bar{\mathbf{p}}\mapsto\mathbf{h}_{pred}$.

We assume the POD-RBF surrogate model $G(\bar{\mathbf{p}})$ satisfies
\begin{equation*}\label{eq49}
\|G(\bar{\mathbf{p}})-F(\mathcal{P}(\bar{\mathbf{p}}))\|_{L^2(W_2)}\le\epsilon_{\text{model}},\quad\forall\bar{\mathbf{p}}\in[0,1]^P.
\end{equation*}

Let $\|F(\mathcal{P}(\bar{\mathbf{p}}))-\mathbf{h}_{true}\|_{L^2}\le\epsilon_{\text{param}}$, $\mathcal{P}({\mathbf{p}})$ represents the parameterized potential term, $\epsilon_{\text{param}}$ represents the parametric error.

We introduce a Tikhonov regularization functional for \(\bar{\mathbf{p}}\):
\begin{equation}\label{eq51}
J(\bar{\mathbf{p}}) = \frac12 \bigl\|G(\bar{\mathbf{p}}) - \mathbf{h}^{\delta}\bigr\|_2^2 + \frac{\alpha}{2}\,\mathcal{R}(\bar{\mathbf{p}}),
\end{equation}
where
\[
\mathcal{R}(\bar{\mathbf{p}})=\begin{cases}
\|D\bar{\mathbf{p}}\|_2^2,&n=1,\\[2mm]
\|\Delta\bar{\mathbf{p}}\|_2^2,&n=2,
\end{cases}
\]
\(D\) and \(\Delta\) are the discrete gradient and Laplacian matrices, respectively. The regularization parameter satisfies \(c_1\delta^2\le\alpha\le c_2\delta^2\) for some constants \(c_1,c_2>0\). The first term in \eqref{eq51} enforces consistency with the noisy measurements, while the second term stabilizes the discrete derivatives of \(\bar{\mathbf{p}}\). Then the optimal parameter vector is obtained by solving
\begin{equation}\label{eq52}
\bar{\mathbf{p}}^{\delta} = \arg\min_{\bar{\mathbf{p}}\in[0,1]^P} J(\bar{\mathbf{p}}),
\end{equation}
and the reconstructed potential is \(q_{\text{recon}} = \mathcal{P}(\bar{\mathbf{p}}^{\delta})\).

\begin{lemma}\label{th51}
Let $\Omega\subset\mathbb{R}^n$ ($n=1,2$) be a bounded open set, $s\in[\frac14,\frac12]$ for $n=1$ and $s\in[\frac14,1)$ for $n=2$, and let $W_1,W_2\subset\Omega_e$ satisfy $\overline{\Omega}\cap\overline{W_1}=\overline{\Omega}\cap\overline{W_2}=\emptyset$. In addition, suppose the following conditions hold:

\begin{enumerate}
  \item Suppose $f\in C_c^\infty(\Omega_e)\setminus\{0\}$ with $\operatorname{supp}(f)\subset W_1$ and
\[
\frac{\|f\|_{H^s(W_1)}}{\|f\|_{L^2(W_1)}}\le C.
\]
  \item Let $q_{\text{true}}\in H^1(\Omega)$ (1D) or $H^2(\Omega)$ (2D) be the true potential, and let $\mathbf{h}^\delta$ be noisy measurements with $\|\mathbf{h}^\delta-\mathbf{h}_{true}\|_{L^2(W_2)}\le\delta$.
  \item Suppose that \(\operatorname{supp}( q_{\text{true}}),\operatorname{supp}(q_{\text{recon}})\subset \Omega^{\prime}\Subset \Omega_{e}\) where \(\Omega^{\prime}\Subset \Omega\) means that \(\overline{\Omega^{\prime}}\subset \Omega\) and \(\overline{\Omega^{\prime}}\) is a compact subset of \(\mathbb{R}\) [2], let the real potential \(q_{\text{true}} = 0\) near \(\partial \Omega_{e}\)
\[
F(q_{\mathrm{true}})= \mathbf{h}_{\text{true}}\coloneqq (-\Delta)^{s}u_{\text{true}}|_{W_{1}},\quad q_{\text{true}}\in H^{1}(\Omega),
\]
where $u_{\text{true}}$ is the real solution of \eqref{eq11} with $q_{\text{ture}}$, and consider that $q_{\text{recon}}\in H^1(\Omega)$ satisfies
\[
J(\bar{\mathbf{p}}^{\delta})\leq \inf J(\bar{\mathbf{p}}) + \delta^{2}.
\]
  \item  Assume the parametric error and the surrogate model error satisfy
\[
 \epsilon_{\text{param}}= \dot{C}\delta,\quad \epsilon_{\text{model}} = \ddot{C}\delta.
\]
\end{enumerate}
Let \(a > 0\) be such that \(a\sim \delta^{2}\) when \(\delta \downarrow 0\). Then
\begin{equation}\label{eq531}
\|q_{\text{recon}}-q_{\text{true}}\|_{L^\infty(\Omega)}= \mathcal{O}(\omega (\delta)),
\end{equation}
as \(\delta \downarrow 0\) where
\[
\omega (\delta) = |\log (C_{1}\delta)|^{-\nu}.
\]
\end{lemma}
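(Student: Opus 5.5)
The proof splits into two parts. First I show that the reconstructed data are within $\mathcal{O}(\delta)$ of the true data. Then I convert that data-space bound into an $L^\infty$ bound on the potentials, using a logarithmic stability estimate for the single-measurement fractional Calder\'on problem.

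\textbf{Step 1 (data misfit).} Choose a comparison vector $\bar{\mathbf{p}}^\ast\in[0,1]^P$ whose parameterized potential $\mathcal{P}(\bar{\mathbf{p}}^\ast)$ realizes the parametric error, so that $\|F(\mathcal{P}(\bar{\mathbf{p}}^\ast))-\mathbf{h}_{true}\|_{L^2(W_2)}\le\epsilon_{\text{param}}$. By the triangle inequality, the model-error assumption, assumption (4) and hypothesis (2),
\[
\|G(\bar{\mathbf{p}}^\ast)-\mathbf{h}^\delta\|\le \epsilon_{\text{model}}+\epsilon_{\text{param}}+\delta=(1+\dot C+\ddot C)\delta .
\]
Hence $\inf J\le J(\bar{\mathbf{p}}^\ast)\le \tfrac12(1+\dot C+\ddot C)^2\delta^2+\tfrac{\alpha}{2}\mathcal{R}(\bar{\mathbf{p}}^\ast)$. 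The quantity $\mathcal{R}(\bar{\mathbf{p}}^\ast)$ is bounded because $\bar{\mathbf{p}}^\ast$ lies in the compact cube $[0,1]^P$; the $H^1$/$H^2$ regularity of $q_{\text{true}}$ should make this bound independent of the mesh. Since $\alpha\sim\delta^2$, we get $\inf J=\mathcal{O}(\delta^2)$.

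The quasi-minimality hypothesis $J(\bar{\mathbf{p}}^\delta)\le\inf J+\delta^2$ then gives two consequences:
\[
\|G(\bar{\mathbf{p}}^\delta)-\mathbf{h}^\delta\|=\mathcal{O}(\delta),
\qquad
\mathcal{R}(\bar{\mathbf{p}}^\delta)\le 2J(\bar{\mathbf{p}}^\delta)/\alpha=\mathcal{O}(1).
\]
Applying the model-error bound once more, together with hypothesis (2),
\[
\|F(q_{\text{recon}})-F(q_{\text{true}})\|_{L^2(W_2)}\le \epsilon_{\text{model}}+\mathcal{O}(\delta)+\delta=\mathcal{O}(\delta).
\]
The bound on $\mathcal{R}(\bar{\mathbf{p}}^\delta)$ gives a uniform a priori bound: $\|q_{\text{recon}}\|_{H^1}$ (1D) or $\|q_{\text{recon}}\|_{H^2}$ (2D) is at most some $E$ independent of $\delta$.

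\textbf{Step 2 (stability).} The restrictions on $s$ are exactly the Sobolev embedding conditions for the a priori class. In 1D, $H^1\hookrightarrow L^\infty$. In 2D, $H^2\hookrightarrow C^{0,\beta}$. With the support condition (3), both $q_{\text{true}}$ and $q_{\text{recon}}$ lie in a compact admissible set of the kind used in the logarithmic stability theorem for the single-measurement fractional Calder\'on problem (the cited work of Ghosh et al.\ / R\"uland--Salo type estimates). That theorem yields
\[
\|q_1-q_2\|_{L^\infty(\Omega')}\le C\,|\log(C_1\|(-\Delta)^s u_1-(-\Delta)^s u_2\|_{L^2(W_2)})|^{-\nu}.
\]
Its proof runs through three ingredients: the Runge approximation and unique continuation for $(-\Delta)^s$ on $W_2$; the identity $q u=-(-\Delta)^s u$ in $\Omega$; and the lower bound $\|u\|_{L^2}\gtrsim\|f\|_{L^2}$, which follows from Lemma 2.1 together with the ratio assumption in (1) and excludes degenerate solutions.

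\textbf{Step 3 (conclusion).} Insert the $\mathcal{O}(\delta)$ bound from Step 1 into the stability estimate. Since $t\mapsto|\log(C_1t)|^{-\nu}$ is monotone for small $t$, and any multiplicative constant inside the logarithm can be absorbed into $C_1$, we obtain \eqref{eq531}.

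\textbf{Main obstacle.} I expect the main difficulty to be Step 2: checking that $q_{\text{recon}}$ really belongs to the a priori class with a bound independent of $\delta$, and that the single-measurement stability theorem applies with constants uniform over that class. This includes the quotient step, where one divides by $u$ and must control $u$ away from zero on $\operatorname{supp} q$; the positivity of $f$ should be used there. My first attempt is to obtain the uniform class bound from the boundedness of $\mathcal{R}(\bar{\mathbf{p}}^\delta)$ combined with the boundedness of the parameter box.
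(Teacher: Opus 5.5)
Your proposal follows essentially the same route as the paper. A comparison vector $\bar{\mathbf{p}}^*$ realizing $\epsilon_{\text{param}}$, quasi-minimality of $J$ with $\alpha\sim\delta^2$, and triangle inequalities with $\epsilon_{\text{model}}$ give $\|F(q_{\text{recon}})-F(q_{\text{true}})\|_{L^2}=\mathcal{O}(\delta)$, which is then inserted into the cited logarithmic single-measurement stability estimate. The paper states that estimate in $H^{-s}(W_1)$ and reaches it from $L^2$ via $L^2\hookrightarrow H^{-s}$, which your $L^2$ form tacitly uses. One small correction: the estimate requires $q\in C^{0,s}(\overline{\Omega})$, so the relevant 1D embedding is $H^1\hookrightarrow C^{0,1/2}\hookrightarrow C^{0,s}$, not merely $H^1\hookrightarrow L^\infty$; this is where the restriction $s\le\frac12$ comes from.
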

\begin{proof}
The stability estimate given in [\citenum{R2021}, Theorem 1.1] as
\begin{equation}\label{eq532}
\| q_{\text{recon}}-q_{\text{true}}\|_{L^{\infty}(\Omega)}\leq \omega (\|F(q_{\text{recon}})-F(q_{\text{true}})\|_{H^{-s}(W_{1})}),
\end{equation}
requiring \(q_{\text{true}},q_{\text{recon}}\in C^{0,s}(\overline{\Omega})\) with \(\operatorname{supp}(q_{\text{true}})\) \(\operatorname{supp}(q_{\text{recon}})\subset \Omega^{\prime}\Subset \Omega\) when \(s\in (0,\frac{1}{2} ]\) it also holds for \(H^{1}(\Omega)\) since \(H^{1}(\Omega)\hookrightarrow C^{0,s}(\overline{\Omega})\) by Sobolev embedding theorem.
Let $q_{\mathrm{recon}}=\mathcal{P}(\bar{\mathbf{p}}^\delta)$, $\bar{\mathbf{p}}^*$ satisfying $\|F(\mathcal{P}(\bar{\mathbf{p}}_*))-\mathbf{h}_{\mathrm{true}}\|_{L^\infty}\le \epsilon_{\mathrm{param}}$, and set $\mathcal{M}^2 = \mathcal{R}(\bar{\mathbf{p}}_*)$, similar to the proof of [\citenum{Li2025}, Theorem 3.1] , we obtain
\[\begin{aligned}
&\| G(\bar{\mathbf{p}}^\delta) - \mathbf{h}^{\delta}\|_{L^{2}(W_1)}^{2} + \alpha\mathcal{R}(\bar{\mathbf{p}}^\delta)\\
&\leq \| G(\bar{\mathbf{p}}^*)- \mathbf{h}^{\delta}\|_{L^{2}(W_1)}^{2} + \alpha\mathcal{R}(\bar{\mathbf{p}}^*) + 2\delta^{2}\\
&\le 3(\|G(\bar{\mathbf{p}}^*)-F(\mathcal{P}(\bar{\mathbf{p}}^*))\|_{L^2(W_1)}^{2}+\|F(\mathcal{P}(\bar{\mathbf{p}}^*))-F(q_{\mathrm{true}})\|_{L^2(W_1)}^{2}\\
&+\|F(q_{\mathrm{true}})-\mathbf{h}^\delta\|_{L^2(W_1)}^{2})+\alpha\mathcal{R}(\bar{\mathbf{p}}^*) + 2\delta^{2}\\
&\le 3(\epsilon_{\text{model}}^{2}+ \epsilon_{\text{param}}^{2}+\delta^{2})+\alpha\mathcal{M}^2+2\delta^{2}\\
&\leq C_4\delta^{2} + \alpha\mathcal{M}^{2}.
\end{aligned}
\]
Hence
\[
\| G(\bar{\mathbf{p}}^\delta) - \mathbf{h}^{\delta}\|_{L^{2}(W_1)}\leq \left(C_4\delta^{2} + \alpha\mathcal{M}^{2}\right)^{\frac{1}{2}},
\]
\[
\mathcal{R}(\bar{\mathbf{p}}^\delta)\leq \left(\frac{C_4\delta^{2}}{\alpha} +\mathcal{M}^{2}\right)^{\frac{1}{2}}.
\]
When \(\alpha \sim \delta^{2}\) , choose \(C_{2}\delta^{2}\leq \alpha \leq C_{3}\delta^{2}\) , and then we get
\[
\| G(\bar{\mathbf{p}}^\delta) - \mathbf{h}^{\delta}\|_{L^{2}(W_1)}\leq (C_4 + C_{3}\mathcal{M}^{2})^{\frac{1}{2}}\delta ,
\]
\[
\mathcal{R}(\bar{\mathbf{p}}^\delta)\leq (C_4C_{2}^{-1} + \mathcal{M}^2)^{\frac{1}{2}}.
\]
Therefore
\[
\begin{aligned}
&\|F(q_{\mathrm{recon}})-F(q_{\mathrm{true}})\|_{L^2(W_1)}\\
&\le \|F(q_{\mathrm{recon}})-G(\bar{\mathbf{p}}^\delta)\|_{L^2(W_1)}+\|G(\bar{\mathbf{p}}^\delta)-\mathbf{h}^\delta\|_{L^2(W_1)}+\|\mathbf{h}^\delta-F(q_{\mathrm{true}})\|_{L^2(W_1)}\\
&\le \epsilon_{\mathrm{model}}+(C_4 + C_{3}\mathcal{M}^{2})^{\frac{1}{2}}\delta +\delta\\
&= (\ddot{C}+(C_4 + C_{3}\mathcal{M}^{2})^{\frac{1}{2}}+1)\delta,
\end{aligned}
\]
and by the properties of Sobolev spaces \cite{M-2000}, it can be verified that \(L^{2}(W_{1})\) embeds to \(H^{- s}(W_{1})\) and
\[
\| F(q_{\mathrm{recon}})-F(q_{\mathrm{true}})\|_{H^{-s}(W_{1})}\leq C_{5}(\ddot{C}+(C_4 + C_{3}\mathcal{M}^{2})^{\frac{1}{2}}+1)\delta ,
\]
for a positive constant \(C_{5}\) . By \eqref{eq532},
\[
\| q_{\text{recon}} - q_{\text{true}}\|_{L^{\infty}(\Omega)}\leq \omega (\|  F(q_{\mathrm{recon}})-F(q_{\mathrm{true}})\|_{H^{-s}(W_{1})})\leq \mathcal{O}(\omega (\delta))
\]
holds.
\end{proof}

\begin{rem}
The parameterization error \(\epsilon_{\text{param}}\) arises from representing the true potential \(q_{\text{true}}\) by a finite-dimensional vector \(\bar{\mathbf{p}}\in[0,1]^P\). We assume that the parameterization map \(\mathcal{P}:[0,1]^P \to L^\infty(\Omega)\) has sufficient approximation power, e.g., piecewise constant, spline, or finite element basis functions, such that for any prescribed accuracy \(\epsilon>0\) there exist a sufficiently large \(P\) and a reference parameter \(\bar{\mathbf{p}}_*\) satisfying
\[
\|\mathcal{P}(\bar{\mathbf{p}}_*) - q_{\text{true}}\|_{L^\infty(\Omega)} \le \epsilon_{\text{param}}.
\]
In practice, \(\epsilon_{\text{param}}\) can be made arbitrarily small by increasing the number of parameters (e.g., refining the mesh or adding basis functions). In the theoretical analysis, we assume that the parameterization accuracy is controlled to be of the same order as the observation noise level \(\delta\), i.e.,
\[
\epsilon_{\text{param}} \sim \delta.
\]
During the offline stage we generate \(M\) training samples \(\{\bar{\mathbf{p}}^{(m)}\}\), compute the exact observations \(\mathbf{h}^{(m)} = F(\mathcal{P}(\bar{\mathbf{p}}^{(m)}))\) (after discretization), and then construct the surrogate model \(\mathbf{h}_{\text{pred}}(\bar{\mathbf{p}})\) via POD reduction and RBF interpolation. The surrogate satisfies the error estimate 
\[
\|\mathbf{h}_{\text{pred}}(\bar{\mathbf{p}}) - F(\mathcal{P}(\bar{\mathbf{p}}))\|_{L^2(W_1)} \le \epsilon_{\text{POD}} + \epsilon_{\text{RBF}} + \epsilon_{\text{disc}},
\]
where \(\epsilon_{\text{POD}}\) is the POD truncation error (which can be made arbitrarily small by retaining enough modes), \(\epsilon_{\text{RBF}}\) is the RBF interpolation error (uniformly bounded when training samples are sufficiently dense and the radial basis function has good approximation properties), and \(\epsilon_{\text{disc}}\) is the spatial discretization error (e.g. \(h^\kappa\)). In the theoretical analysis, we assume that the various approximate errors (POD truncation, RBF interpolation, spatial discretization) in the offline stage are all controlled to be much smaller than the level of the observation noise, on the order of $\delta$. This assumption can be approximately satisfied in practice by appropriately increasing the number of retained modes, the number of training samples, and the resolution of the spatial grid
\[
\epsilon_{\text{model}}=\epsilon_{\text{POD}} + \epsilon_{\text{RBF}} + \epsilon_{\text{disc}} \sim \delta.
\]

 Therefore for the inversion we may effectively assume
\[
 \epsilon_{\text{param}}= \dot{C}\delta,\quad \epsilon_{\text{model}} = \ddot{C}\delta.
\]
\end{rem}

\begin{rem}
The above analysis shows that the POD-RBF surrogate method does not degrade the theoretical convergence order of the inversion. Although parameterization and reduced-order approximations are introduced, the final convergence rate remains logarithmic as long as the additional errors are kept below the noise level. Moreover, compared to \cite{Li2025}, the surrogate model greatly reduces the computational cost of the online inversion, making the proposed method attractive for practical applications.
\end{rem}

\subsection{Algorithm}

Because the surrogate model is an explicit analytic function of \(\bar{\mathbf{p}}\), its gradient can be computed exactly. The partial derivative of \(\mathbf{h}_{\mathrm{pred}}\) with respect to the \(j\)-th component is
\begin{equation}
\frac{\partial\mathbf{h}_{\mathrm{pred}}}{\partial\bar{p}_j} = \boldsymbol{\Psi}\mathbf{B}\,\frac{\partial\mathbf{f}}{\partial\bar{p}_j},
\label{eq53}
\end{equation}
where \(\mathbf{f}(\bar{\mathbf{p}}) = [f_1(\bar{\mathbf{p}}),\dots,f_M(\bar{\mathbf{p}})]^T\) with the inverse multiquadric radial basis function
\begin{equation}
f_m(\bar{\mathbf{p}}) = \frac{1}{\sqrt{\|\bar{\mathbf{p}}-\bar{\mathbf{p}}^{(m)}\|_2^2 + \sigma^2}},
\label{eq54}
\end{equation}
and
\begin{equation}
\frac{\partial f_m}{\partial\bar{p}_j} = -\frac{\bar{p}_j - \bar{p}_j^{(m)}}{\bigl(\|\bar{\mathbf{p}}-\bar{\mathbf{p}}^{(m)}\|_2^2 + \sigma^2\bigr)^{3/2}}.
\label{eq55}
\end{equation}
Consequently, the gradient of \eqref{eq51} is
\begin{equation}
\nabla J(\bar{\mathbf{p}}) = \left(\frac{\partial\mathbf{h}_{\mathrm{pred}}}{\partial\bar{\mathbf{p}}}\right)^{\!T}\!\bigl(\mathbf{h}_{\mathrm{pred}}(\bar{\mathbf{p}})-\mathbf{h}^{\delta}\bigr) + \frac{\alpha}{2}\,\nabla\mathcal{R}(\bar{\mathbf{p}}),
\label{eq56}
\end{equation}
where
\[
\nabla\mathcal{R}(\bar{\mathbf{p}}) =
\begin{cases}
2D^{\!T}D\bar{\mathbf{p}}, & \text{1D case},\\[4pt]
2\Delta^{\!T}\Delta\bar{\mathbf{p}}, & \text{2D case}.
\end{cases}
\]

Next, we minimize \eqref{eq51} using a conjugate gradient (CG) method. The iterative procedure is as follows.
We apply the following discrepancy principle,
\begin{equation}
\|\mathbf{h}_{\mathrm{pred}}(\bar{\mathbf{p}}_k) - \mathbf{h}^{\delta}\|_{L^2} \le \tau\delta,
\label{eq57}
\end{equation}
here coefficient \(\tau\ge 1\).


The optimal normalized parameter \(\bar{\mathbf{p}}^*\) is obtained. After inverse normalization to the physical scale,
\begin{equation}
p_i^{\,*} = p_i^{\min} + \bar{p}_i^{\,*}\bigl(p_i^{\max}-p_i^{\min}\bigr),\quad i=1,\dots,P,
\label{eq59}
\end{equation}
and the reconstructed potential is
\begin{equation}
q^*(x) = \sum_{i=1}^{P} p_i^{\,*}\,\phi_i(x).
\label{eq510}
\end{equation}

\noindent
\setlength{\fboxsep}{4pt}
\fbox{\begin{minipage}{0.9\linewidth}
\refstepcounter{algorithm}%
\textbf{Algorithm~\thealgorithm: POD-RBF Conjugate Gradient Inversion}
\\[-8pt]%
\noindent\makebox[\linewidth][c]{\rule{\dimexpr\linewidth+2\fboxsep\relax}{0.4pt}}
\vspace{-10pt}
\begin{algorithmic}
\STATE \textbf{Offline stage (precomputation):}
\STATE \quad Generate \(M\) training samples \(\{\bar{\mathbf{p}}^{(m)}\}_{m=1}^M\).
\STATE \quad For each sample, solve the forward problem \eqref{eq26} (1D) or \eqref{eq215} (2D) to obtain \(\mathbf{U}_R^{(m)}\).
\STATE \quad Compute observation vectors \(\mathbf{h}^{(m)}\) via \eqref{eq217}, form snapshot matrix \(\mathbf{H}\).
\STATE \quad Center snapshots: \(\tilde{\mathbf{h}}^{(m)} = \mathbf{h}^{(m)} - \bar{\mathbf{h}}\) with \(\bar{\mathbf{h}} = \frac1M\sum_{m=1}^M\mathbf{h}^{(m)}\).
\STATE \quad Compute POD basis \(\boldsymbol{\Psi}\) and amplitude matrix \(\mathbf{A}\) (see Section~3).
\STATE \quad Train RBF network to obtain coefficient matrix \(\mathbf{B}\) (see Section~4).
\STATE \textbf{Online stage (for each measurement \(\mathbf{h}^{\delta}\)):}
\STATE \quad Initialize \(\bar{\mathbf{p}}_0 = \mathbf{0}\), \(k=0\).
\STATE \quad Compute \(\mathbf{h}_{\mathrm{pred}}(\bar{\mathbf{p}}_0)\) using \eqref{eq48} and residual \(\mathbf{r}_0 = \mathbf{h}_{\mathrm{pred}}(\bar{\mathbf{p}}_0)-\mathbf{h}^{\delta}\).
\STATE \quad Evaluate \(\mathbf{g}_0 = \nabla J(\bar{\mathbf{p}}_0)\) via \eqref{eq56}, set descent direction \(\mathbf{d}_0 = -\mathbf{g}_0\).
\STATE \textbf{While} \(\|\mathbf{h}_{\mathrm{pred}}(\bar{\mathbf{p}}_k)-\mathbf{h}^{\delta}\|_2 > \tau\delta\) \textbf{do}
\STATE \quad \emph{Line search:} compute step size \(\beta_k\) using the Gauss每Newton approximation
\[
\beta_k = \frac{\mathbf{g}_k^{\!T}\mathbf{d}_k}{\mathbf{d}_k^{\!T}\mathbf{H}_k\mathbf{d}_k + \alpha\,\mathbf{d}_k^{\!T}\nabla^2\mathcal{R}(\bar{\mathbf{p}}_k)\mathbf{d}_k},
\]
\STATE \qquad where \(\mathbf{H}_k = \bigl(\frac{\partial\mathbf{h}_{\mathrm{pred}}}{\partial\bar{\mathbf{p}}}\big|_{\bar{\mathbf{p}}_k}\bigr)^{\!T}\!\bigl(\frac{\partial\mathbf{h}_{\mathrm{pred}}}{\partial\bar{\mathbf{p}}}\big|_{\bar{\mathbf{p}}_k}\bigr)\).
\STATE \quad Update: \(\bar{\mathbf{p}}_{k+1} = \bar{\mathbf{p}}_k + \beta_k\mathbf{d}_k\). Project onto \([0,1]^P\).
\STATE \quad Compute new gradient \(\mathbf{g}_{k+1} = \nabla J(\bar{\mathbf{p}}_{k+1})\).
\STATE \quad Compute conjugate coefficient:
\[
\gamma_{k+1} = \frac{\mathbf{g}_{k+1}^{\!T}(\mathbf{g}_{k+1}-\mathbf{g}_k)}{\|\mathbf{g}_k\|_2^2}, \qquad
\mathbf{d}_{k+1} = -\mathbf{g}_{k+1} + \gamma_{k+1}\mathbf{d}_k.
\]
\STATE \quad \(k \leftarrow k+1\).
\STATE \textbf{End while}
\STATE \quad Output \(\bar{\mathbf{p}}^* = \bar{\mathbf{p}}_k\). Reconstruct physical potential via \eqref{eq510}.

\end{algorithmic}
\end{minipage}}

\section{Numerical experiments}
\label{sec:experiments}
In this section, we present the numerical results for one-dimensional and two-dimensional fractional Calder車n problems. The observed values are given by $$h^\delta = (-\Delta)^s u|_{W}+\delta(2rand(size(h)) - 1),$$ where rand generates random numbers uniformly distributed over [0,1], $size(h)$ is the number of discrete points in $W_2$, and $\delta$ is the level of noise in the observation.

$Example~1.$\ \  In this example, let $\tau=1.01$, the potential \(q\) is given by \(q(x) = \sin (\pi x)\), and we assume that \(\Omega = (-1,1)\), \(s = 0.4\), and \(f(x)\) is a \(\mathbf{1}_{W_1}\) polished function. Let the observation set \(W_2 = (-3, -1 - \epsilon) \cup (1 + \epsilon, 3)\), \(0 < \epsilon \ll 1\), and \(W_1 = W_2\).  Fig. \ref{fig41}(a) presents the reconstruction results of $q^\delta(x)$ with noise level $\delta$=1E-7, 1E-5, 1E-3 compared to the real potential $q(x)$. It shows that the reconstruction performs well at lower noise levels $\delta$, however, the quality of the results decreases rapidly for higher noise, which reflects the pathologies of fractional Calderon problem. The three subplots in Fig. \ref{fig41}(b) respectively illustrate the crucial convergence information during the inversion process based on POD and CG. Left: Decay of POD eigenvalues, the dashed red line indicates the truncation index K; Middle: Cumulative energy percentage as a function of the number of POD modes; Right: Residual norm $||\mathbf{h}_{\mathrm{pred}} - \mathbf{h}^{\delta}||$ versus CG iteration, the red dashed line denotes the discrepancy principle $\tau\delta$.
\begin{figure}[htbp]
	\centering    \captionsetup{font={small}}
	{
		\begin{minipage}{0.7\linewidth}
			\centering
			\includegraphics[height=0.8\columnwidth,width=1\linewidth]{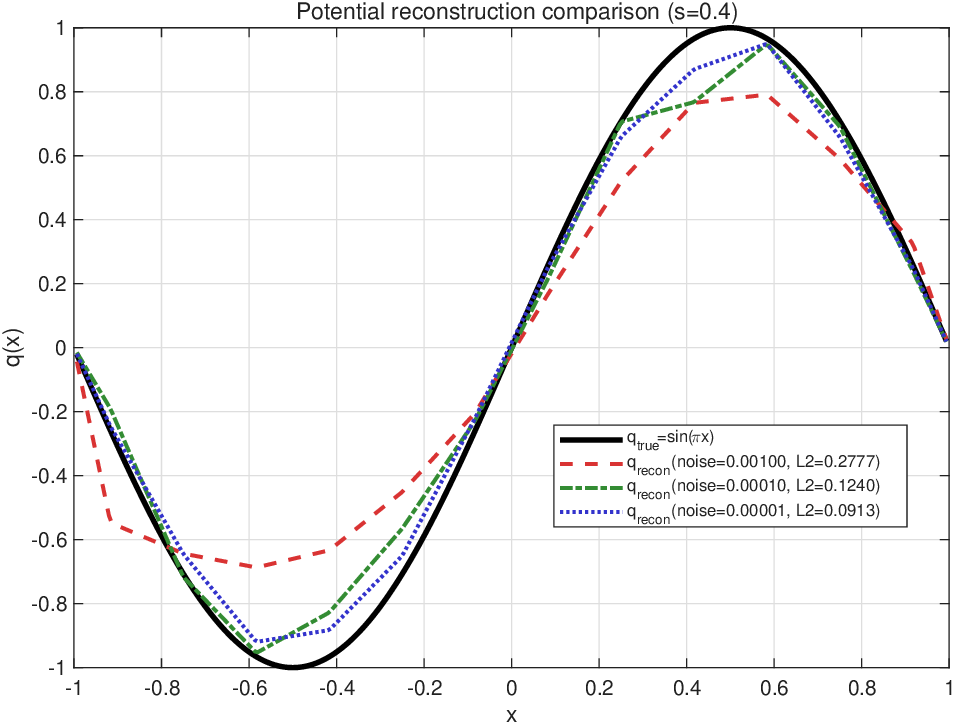}\\
			(a)
		\end{minipage}
	}
\newline
	{
		\begin{minipage}{1\linewidth}
			\centering
			\includegraphics[height=0.5\columnwidth,width=1\linewidth]{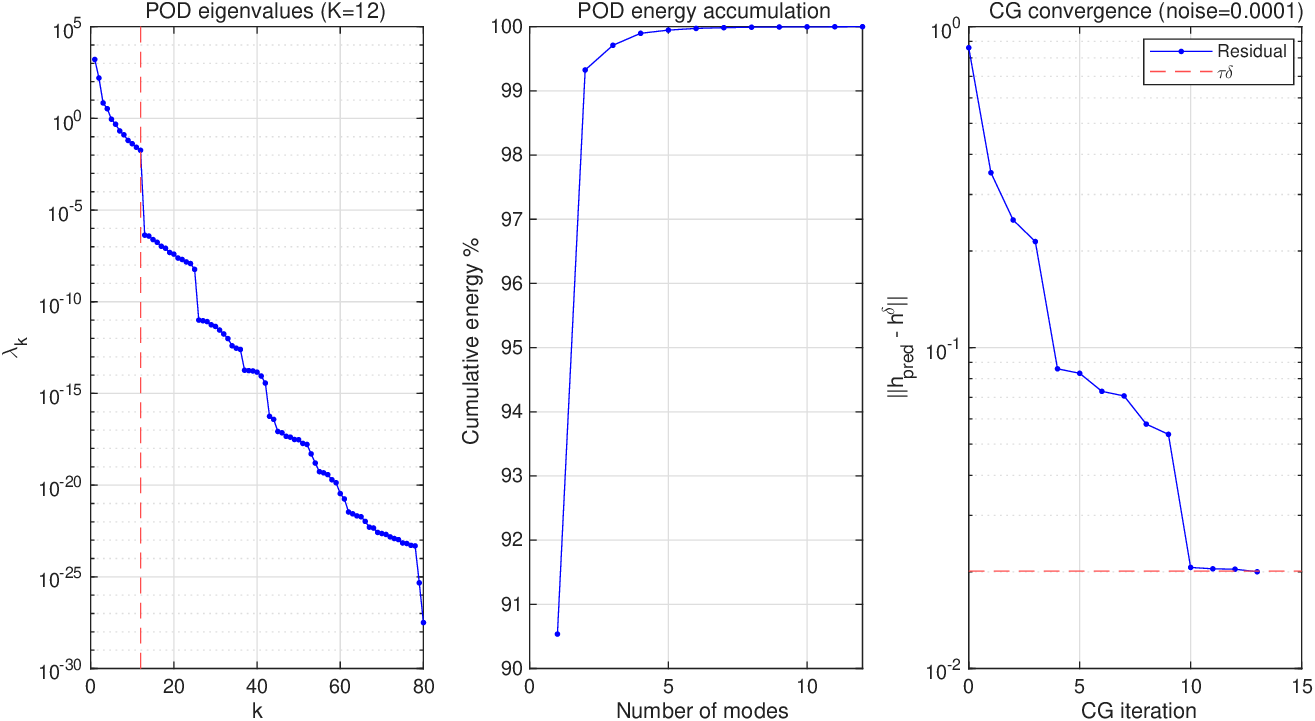}\\
			(b)
		\end{minipage}
	}
\caption{The numerical results for Example 1 with different noise level $\delta$.}
	\label{fig41}
\end{figure}

$Example~2.$\ \  In this example, \(q(x)\) is prescribed as
\[
q(x) = \begin{cases}
10(0.75^2 - x^2)^2, & |x| \le 0.75,\\
0, & \text{otherwise},
\end{cases}
\]
Other parameters such as \(\Omega = (-1,1)\), \(s=0.6\), the boundary datum \(f(x)\) (a smoothed indicator of \(W_1\)), and the observation sets \(W_1 = W_2 = (-3, -1-\epsilon) \cup (1+\epsilon, 3)\) with \(0<\epsilon\ll 1\) are the same as in Example~4.1. The regularization parameter is chosen as \(\alpha \sim \delta^2\).  The inversion potential $q^\delta(x)$ with noise level $\delta$ =1E-7,1E-6,1E-5 and $\alpha=\delta^2$ is shown in Fig. \ref{fig42} along with the real potential $q(x)$.
\begin{figure}[htbp]
	\centering    \captionsetup{font={small}}
	{
		\begin{minipage}{0.7\linewidth}
			\centering
			\includegraphics[height=0.8\columnwidth,width=1\linewidth]{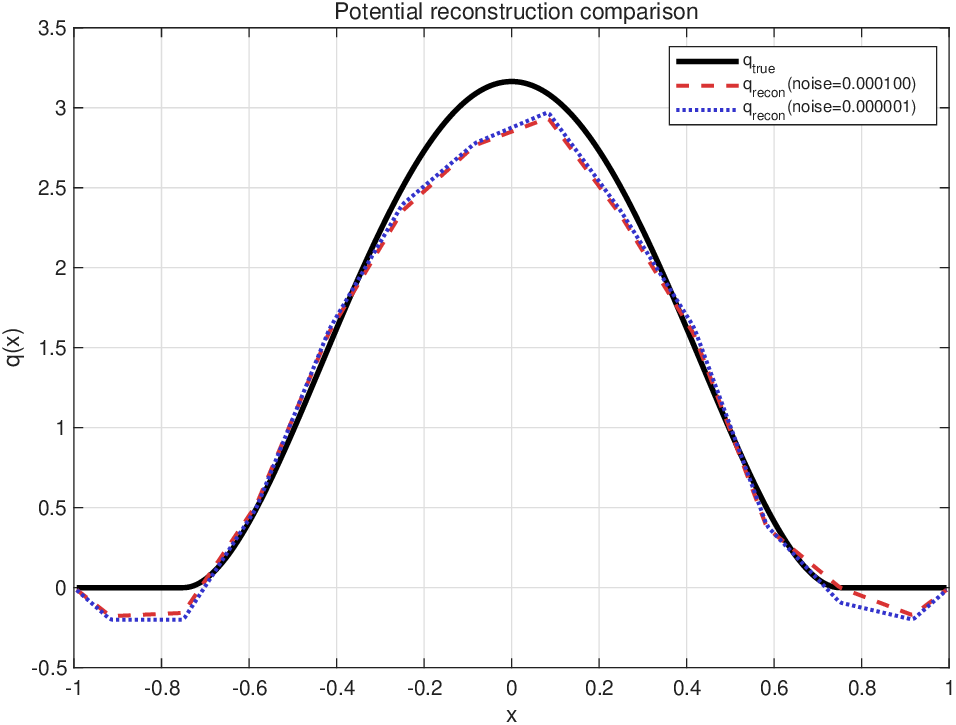}\\
			(a)
		\end{minipage}
	}
\caption{The numerical results for Example 2 with different noise level $\delta$.}
	\label{fig42}
\end{figure}

$Example~3.$\ \
Here we consider a less regular, piecewise constant potential \(q(x)\). The exact expression is not given in the original text. For completeness, the potential is taken as a discontinuous function; a typical choice in such tests is
\[
q(x) =
\begin{cases}
1, & |x| \le 0.5,\\
0, & \text{otherwise},
\end{cases}
\]
though the original paper does not specify the exact form. Let $s=0.8$, other parameters (\(\Omega, f, W_1, W_2\)) are identical to those in Example~4.1. Fig. \ref{fig43} shows the numerical results of the estimated potential using our algorithm at a noise level of $\delta$ = 1E-7, as well as the true potential. Compared with the previous two examples, the error is larger, which is due to the prior smoothing property.
\begin{figure}[htbp]
	\centering    \captionsetup{font={small}}
	{
		\begin{minipage}{0.7\linewidth}
			\centering
			\includegraphics[height=0.8\columnwidth,width=1\linewidth]{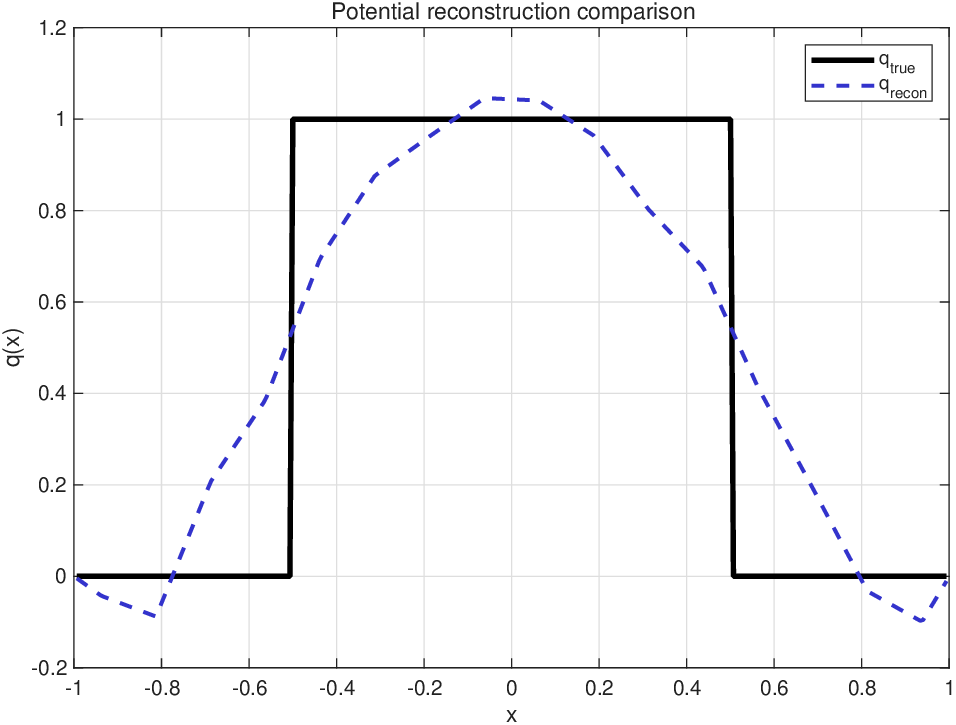}\\
			(a)
		\end{minipage}
	}
\caption{The numerical results for Example 3 with noise level $\delta=1e-7$.}
	\label{fig43}
\end{figure}

$Example~4.$\ \   This example tests the two-dimensional algorithm. The true potential is
\[
q(x,y) = 100 \times \max\Bigl( (0.75^2 - x^2)^3 (0.75^2 - y^2)^3,\; 0 \Bigr),
\]
defined on \(\Omega = (-1,1)^2\). The fractional exponent is \(s=0.5\), and the boundary datum \(f\) is a smoothed indicator of \(W_1\). The observation set is
\[
W_2 = (-3,3)^2 \setminus (-1-\epsilon,1+\epsilon)^2,\qquad 0<\epsilon\ll 1,
\]
with \(W_1 = W_2\). Fig. \ref{fig44}(a) and Fig. \ref{fig44}(b) respectively illustrate the true potential $q(x,y)$ and the reconstructed result $q^\delta(x,y)$, where $\delta$ = 1E-6. Fig. \ref{fig44}(c) shows the absolute error $|q^\delta(x,y)-q(x,y)|$ corresponding to $\delta$= 1E-6.
\begin{figure}[htbp]
	\centering
    \captionsetup{font={small}}
	{
		\begin{minipage}{0.45\linewidth}
			\centering
			\includegraphics[height=0.7\columnwidth,width=0.9\linewidth]{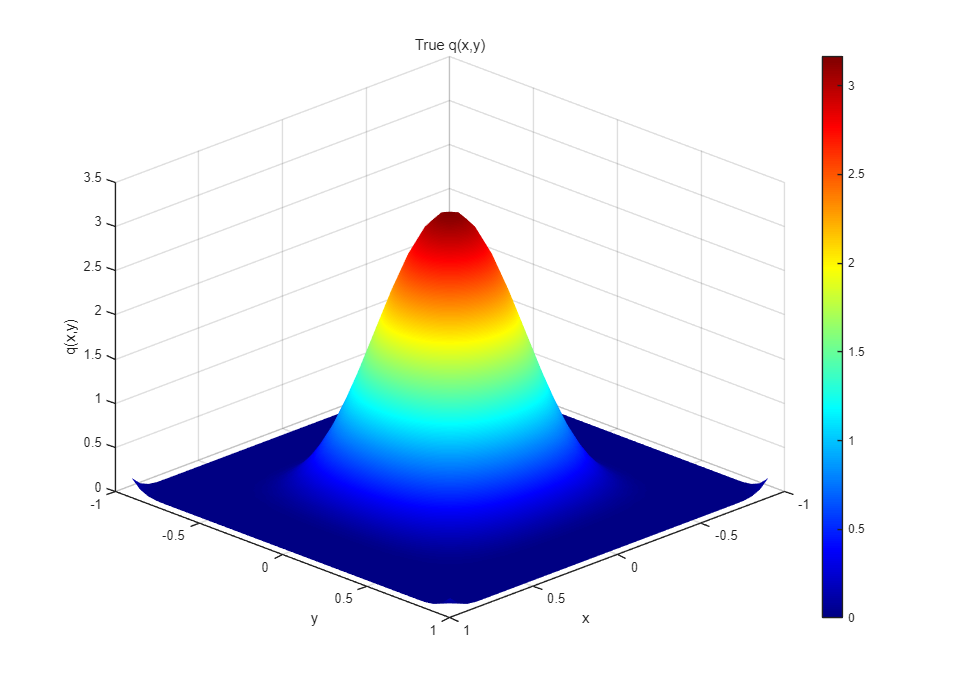}\\
			(a)
		\end{minipage}
		\begin{minipage}{0.45\linewidth}
			\centering
			\includegraphics[height=0.7\columnwidth,width=0.9\linewidth]{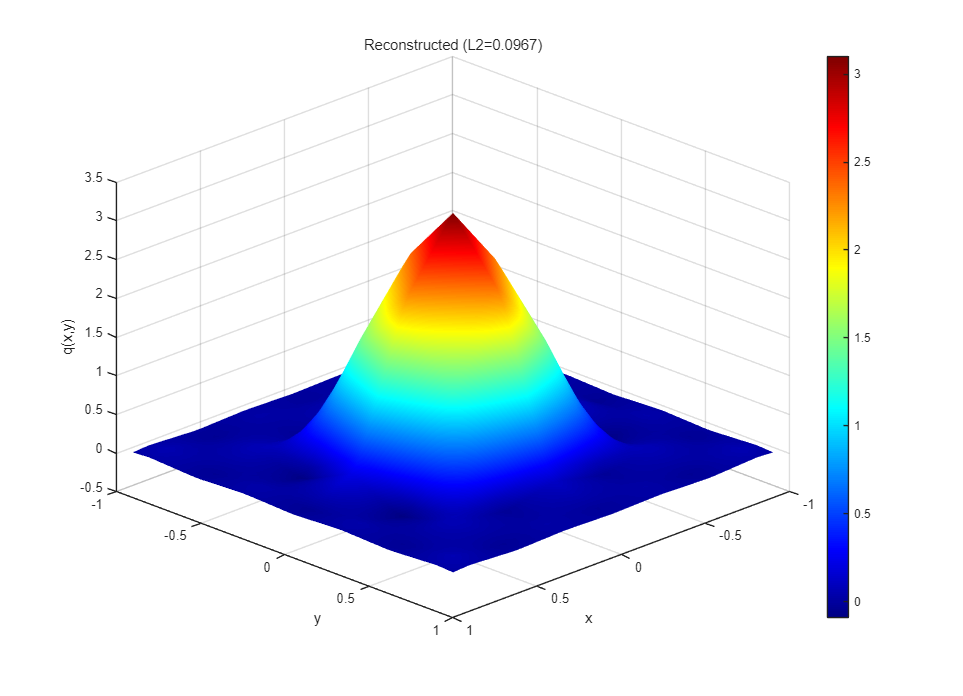}\\
			(b)
		\end{minipage}
	}
\newline
	{
		\begin{minipage}{0.5\linewidth}
			\centering
			\includegraphics[height=0.8\columnwidth,width=1\linewidth]{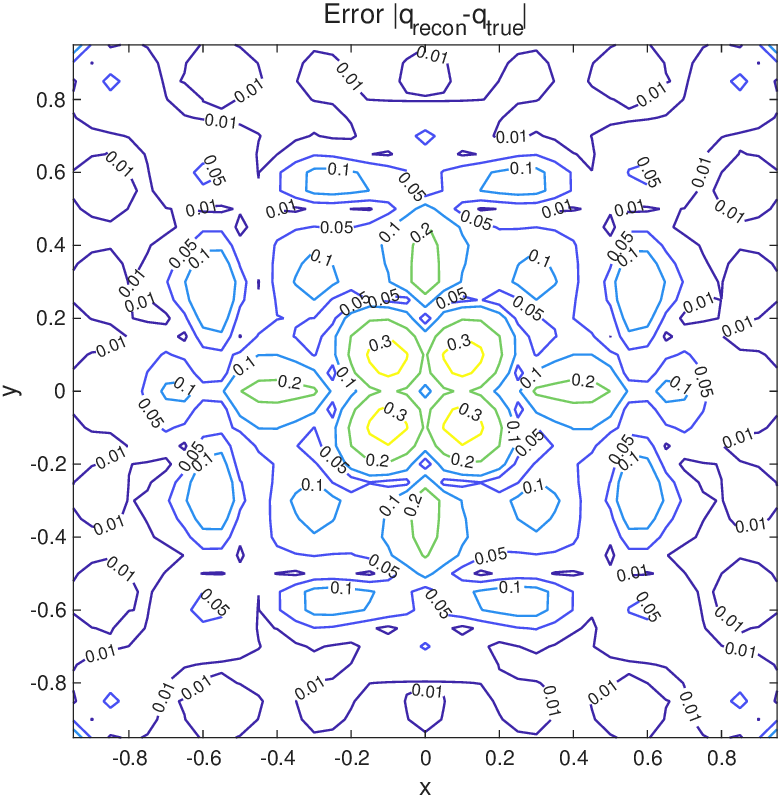}\\
			(c)
		\end{minipage}
	}
\caption{The numerical results of two-dimensional fractional Calder\'{o}n problem for Example 4, (a) the real potential $q(x,y)$; (b) the reconstruction potential $q^\delta(x,y)$ with $\delta$=1E-6; (c) the reconstruction error $|q^\delta(x,y)-q(x,y)|$.}
	\label{fig44}
\end{figure}

\section{Conclusion}
\label{sec:conclusion}

This paper develops a POD-RBF network method tailored for the fractional Calder\'{o}n problem with a single measurement. By constructing a reduced-order surrogate directly from observation snapshots, the method circumvents the repeated solution of the expensive nonlocal forward problem during the inversion iterations. The finite difference method for one- and two-dimensional cases includes the treatment of the extended boundary conditions and the affine observation operator. The POD reduction captures the dominant modes of the measurement variation, while the RBF network provides a smooth nonlinear map from the potential parameters to the modal amplitudes. Coupled with Tikhonov regularization and an efficient gradient-based optimization, the approach achieves high computational efficiency and robustness to noise. Extensions to three-dimensional problems and more complex nonlinear fractional equations are envisioned for future work.

\section*{\textbf{CONFLICT OF INTEREST STATEMENT}}
This work does not have any conflicts of interest.
\section*{\textbf{FUNDING}}
The first author was supported by the NSF of China (12261082). The second author thanks the NSF of China (12261082, 11326234), NSF of Gansu Province (145RJZA099), Scientific research project of Higher School in Gansu Province (2014A-012), Project of NWNU-LKQN2020-08, Graduate First Class Course Construction Project of Northwest Normal University in 2024 (2024KCLX01012) and Graduate Education and Curriculum Reform Project of Northwest Normal University in 2024 (2024KGLX01016). The third author thanks the NSF of China (12501593).
\section*{\textbf{AUTHOR CONTRIBUTIONS}}
Y.-L. Liu: Methodology; software; investigation; formal analysis; writing-original draft; writing-review and editing. J. Wen: Conceptualization; methodology; visualization; software; investigation; formal analysis; supervision; funding acquisition; project administration; writing-original draft; writing-review and editing. J.-L. Fu:  Formal analysis; funding acquisition; project administration; writing-original draft; writing-review and editing.

\bibliographystyle{unsrt}
\bibliography{liu}

\end{document}